\renewcommand{\@begintheorem}[2]{\it \trivlist
      \item[\hskip \labelsep{\bf #1\ #2{\rm :}}]}
\renewcommand{\@opargbegintheorem}[3]{\it \trivlist
      \item[\hskip \labelsep{\bf #1\ #2\ {\rm (#3)\/:}}]}
\def\@sect#1#2#3#4#5#6[#7]#8{\ifnum #2>\c@secnumdepth
     \def\@svsec{}\else
     \refstepcounter{#1}\edef\@svsec{\csname the#1\endcsname{.}\hskip 1em }\fi
     \@tempskipa #5\relax
      \ifdim \@tempskipa>\z@
        \begingroup #6\relax
          \@hangfrom{\hskip #3\relax\@svsec}{\interlinepenalty \@M #8\par}
        \endgroup
       \csname #1mark\endcsname{#7}\addcontentsline
         {toc}{#1}{\ifnum #2>\c@secnumdepth \else
                      \protect\numberline{\csname the#1\endcsname}\fi
                    #7}\else
        \def\@svsechd{#6\hskip #3\@svsec #8\csname #1mark\endcsname
                      {#7}\addcontentsline
                           {toc}{#1}{\ifnum #2>\c@secnumdepth \else
                             \protect\numberline{\csname the#1\endcsname}\fi
                       #7}}\fi
     \@xsect{#5}}
\newcommand{\Delete}[1]{}
\theoremstyle{plain}
\newtheorem{Thm}{Theorem}[section]
\newtheorem{Lem}[Thm]{Lemma}
\newtheorem{Prop}[Thm]{Proposition}
\newtheorem*{Defi}{Definition}
\newtheorem{Prob}[Thm]{Problem}
\newcommand{\bR}{\ensuremath{\mathbb{R}}}
\newcommand{\cF}{\ensuremath{\mathcal{F}}}
\newcommand{\cH}{\ensuremath{\mathcal{H}}}
\begin{document}

\title{The competition-common enemy graphs 
of digraphs satisfying Conditions $C(p)$ and $C'(p)$}

\author{\begin{tabular}{c}
{\sc Yoshio SANO}
\thanks{This work was supported by Priority Research Centers Program 
through the National Research Foundation of Korea (NRF) 
funded by the Ministry of Education, Science and Technology 
(2009-0094069).}\\
\\
Pohang Mathematics Institute \\
POSTECH, Pohang 790-784, Korea\\
{\tt ysano@postech.ac.kr}
\end{tabular} }

\date{}

\maketitle

\begin{abstract}
S. -R. Kim and F. S. Roberts (2002) introduced 
the following conditions $C(p)$ and $C'(p)$ for digraphs 
as generalizations of the condition for digraphs to be semiorders. 
The condition $C(p)$ (resp. $C'(p)$) is: 
For any set $S$ of $p$ vertices in $D$, 
there exists $x \in S$ such that 
$N^+_D(x) \subseteq N^+_D(y)$ 
(resp. $N^-_D(x) \subseteq N^-_D(y)$) 
for all $y \in S$, 
where $N^+_D(x)$ (resp. $N^-_D(x)$) 
is the set of out-neighbors (resp. in-neighbors) of $x$ in $D$. 
The {\it competition graph} 
of a digraph $D$ is the (simple undirected) graph 
which has the same vertex set as $D$ 
and has an edge between two distinct vertices $x$ and $y$ 
if $N^+_D(x) \cap N^+_D(y) \neq \emptyset$. 
Kim and Roberts characterized the competition graphs of digraphs which 
satisfy Condition $C(p)$. 

The {\it competition-common enemy graph} 
of a digraph $D$ is the graph 
which has the same vertex set as $D$ 
and has an edge between two distinct vertices $x$ and $y$ 
if it holds that both 
$N^+_D(x) \cap N^+_D(y) \neq \emptyset$ 
and 
$N^-_D(x) \cap N^-_D(y) \neq \emptyset$. 
In this note, we characterize the competition-common enemy graphs 
of digraphs satisfying Conditions $C(p)$ and $C'(p)$. 
\end{abstract}

\noindent
{\bf Keywords:} 
competition-common enemy graph; 
semiorder;
interval order; 
Condition $C(p)$

\newpage
\section{Introduction}

J. E. Cohen \cite{Cohen} introduced the notion of 
a competition graph in 1968 in connection with a problem in ecology. 
The {\it competition graph} $C(D)$ of a digraph $D$ 
is the (simple undirected) graph $G=(V,E)$ which 
has the same vertex set as $D$ 
and has an edge between two distinct vertices 
$x$ and $y$ if and only if $N^+_D(x) \cap N^+_D(y) \neq \emptyset$, 
where $N^+_D(x) := \{v \in V(D) \mid (x,v) \in A(D) \}$ 
is the set of out-neighbors of $x$ in $D$. 
It has been one of important research problems in the study of competition 
graphs to characterize the competition graphs of digraphs satisfying 
some conditions. 

\begin{Defi}
{\rm 
A digraph $D=(V,A)$ is called a 
{\it semiorder} if there exist a real-valued function 
$f:V \to \bR$ on the set $V$ and a positive real number $\delta \in \bR$ 
such that $(x,y) \in A$ if and only if $f(x) > f(y) + \delta$. 

A digraph $D=(V,A)$ is called an 
{\it interval order} if there exists 
an assignment $J:V \to 2^{\bR}$ 
of a closed real interval $J(x) \subset \bR$
to each vertex $x \in V$ 
such that $(x,y) \in A$ if and only if $\min J(x) > \max J(y)$. 
}
\qed
\end{Defi}

Kim and Roberts characterized the competition graphs of semiorders 
and interval orders as follows: 

\begin{Thm}[\cite{KimRoberts}]\label{thm:KR}
Let $G$ be a graph. Then the following are equivalent. 
\begin{itemize}
\item[{\rm (a)}]
$G$ is the competition graph of a semiorder, 
\item[{\rm (b)}]
$G$ is the competition graph of an interval order, 
\item[{\rm (c)}]
$G = K_r \cup I_q$ where if $r \geq 2$ then $q \geq 1$. 
\qed
\end{itemize}
\end{Thm}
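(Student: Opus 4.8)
\emph{Sketch of the intended proof.} The plan is to prove the cycle of implications (a) $\Rightarrow$ (b) $\Rightarrow$ (c) $\Rightarrow$ (a). The implication (a) $\Rightarrow$ (b) is immediate from the definitions: if $D=(V,A)$ is a semiorder with function $f\colon V\to\bR$ and parameter $\delta>0$, then assigning to each vertex $x$ the closed interval $J(x):=[f(x),\,f(x)+\delta]$ gives an interval order representation of the same digraph, since $\min J(x)>\max J(y)$ holds exactly when $f(x)>f(y)+\delta$. So every competition graph of a semiorder is a competition graph of an interval order, and it remains to prove (b) $\Rightarrow$ (c) and (c) $\Rightarrow$ (a).

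For (b) $\Rightarrow$ (c), I would first record the structural fact that the out-neighborhoods of an interval order are linearly ordered by inclusion: if $\min J(x)\geq\min J(y)$ then $N^+_D(y)\subseteq N^+_D(x)$, because $v\in N^+_D(y)$ forces $\min J(x)\geq\min J(y)>\max J(v)$. Now let $G=C(D)$ and split $V(D)$ into the set $R$ of vertices with $N^+_D(x)\neq\emptyset$ and the set $Q$ of vertices with $N^+_D(x)=\emptyset$. Every vertex of $Q$ is isolated in $G$, and any two distinct vertices $x,y\in R$ are adjacent in $G$: by the inclusion property one of $N^+_D(x),N^+_D(y)$ contains the other, and the smaller one is nonempty, so $N^+_D(x)\cap N^+_D(y)\neq\emptyset$. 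Hence $G=K_r\cup I_q$ with $r=|R|$ and $q=|Q|$. Finally, to see that $r\geq2$ forces $q\geq1$, note that when $V(D)\neq\emptyset$ the vertex $v_0$ minimizing $\min J$ can have no out-neighbor $v$, since that would give $\min J(v_0)>\max J(v)\geq\min J(v)\geq\min J(v_0)$; thus $v_0\in Q$ and $q\geq1$.

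For (c) $\Rightarrow$ (a), I would construct a semiorder directly. If $r\leq1$ the graph $K_r\cup I_q$ has no edges, and the arcless digraph on its vertex set is a semiorder (take $f\equiv0$, $\delta=1$) whose competition graph is edgeless, hence equal to $K_r\cup I_q$. If $r\geq2$ then $q\geq1$; labelling the clique vertices $u_1,\dots,u_r$ and the isolated vertices $w_1,\dots,w_q$, I would set $\delta=1$, $f(w_1)=0$, $f(u_i)=2$ for all $i$, and $f(w_j)=1$ for $2\leq j\leq q$. A routine check of all ordered pairs shows the arc set of this semiorder is exactly $\{(u_i,w_1)\mid 1\leq i\leq r\}$, so each $w_j$ is a sink, the $u_i$ pairwise share the out-neighbor $w_1$, and no other pair has a common out-neighbor; therefore $C(D)=K_r\cup I_q$, which closes the cycle.

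The step I expect to be the main obstacle is (b) $\Rightarrow$ (c), specifically the point that $q\geq1$ whenever $r\geq2$ — equivalently, that $K_2$ is not the competition graph of any interval order. This is what makes the characterization non-trivial, and it rests on combining the inclusion-chain structure of out-neighborhoods (the feature abstracted by Condition $C(2)$) with the existence of a source vertex having empty out-neighborhood; the rest is bookkeeping rather than computation.
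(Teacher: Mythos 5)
Your proof is correct in all three implications: the interval representation $J(x)=[f(x),f(x)+\delta]$ for (a) $\Rightarrow$ (b), the inclusion-chain structure of out-neighborhoods plus the existence of a sink-like minimal vertex for (b) $\Rightarrow$ (c), and the explicit two-level function $f$ for (c) $\Rightarrow$ (a) all check out. Note, however, that the paper does not prove this theorem at all --- it is quoted from Kim and Roberts, and the paper explicitly says they obtained it as a corollary of their general characterization of competition graphs of digraphs satisfying Condition $C(p)$. So your route is genuinely different in emphasis: you give a direct, self-contained argument, whereas the cited route first proves a structure theorem for all digraphs satisfying $C(p)$ and then specializes to $p=2$. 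The two are closely related, though, and you have essentially identified the link yourself: your key lemma, that the out-neighborhoods of an interval order are linearly ordered by inclusion, is exactly the verification that interval orders satisfy Condition $C(2)$ --- the same fact this paper invokes in its proof of Theorem \ref{thm:main0} ((b) $\Rightarrow$ (c)) before applying Theorem \ref{thm:acyclic}. What the general $C(p)$ machinery buys is reusability (it also handles $p>2$ and, in this paper's analogous development, the common-enemy version); what your direct proof buys is brevity and independence from that machinery. One small point of care: your argument that the minimal vertex $v_0$ lies in $Q$ only shows $q\geq 1$ when $V(D)\neq\emptyset$, which you correctly flag, and which is guaranteed under the hypothesis $r\geq 2$; so no gap there.
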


\noindent
Moreover, 
Kim and Roberts \cite{KimRoberts} introduced 
some conditions, which are called Condition $C(p)$ and Condition $C'(p)$, 
for digraphs 
as generalizations of the condition for digraphs to be semiorders,  
and they gave a characterization of the competition graphs of digraphs 
satisfying Condition $C(p)$ to show Theorem \ref{thm:KR} as its corollary. 

D. D. Scott \cite{Scott} 
introduced the {\it competition-common enemy graph} of a digraph in 1987 
as a variant of competition graph. 
The {\it competition-common enemy graph} 
of a digraph $D$ is the graph 
which has the same vertex set as $D$ 
and has an edge between two distinct vertices $x$ and $y$ 
if it holds that both 
$N^+_D(x) \cap N^+_D(y) \neq \emptyset$ 
and 
$N^-_D(x) \cap N^-_D(y) \neq \emptyset$, where 
$N^-_D(x) := \{v \in V(D) \mid (v,x) \in A(D) \}$ 
is the set of in-neighbors of $x$ in $D$.

\begin{figure}[h]
\begin{center}
 \resizebox{10cm}{!}{
  \includegraphics{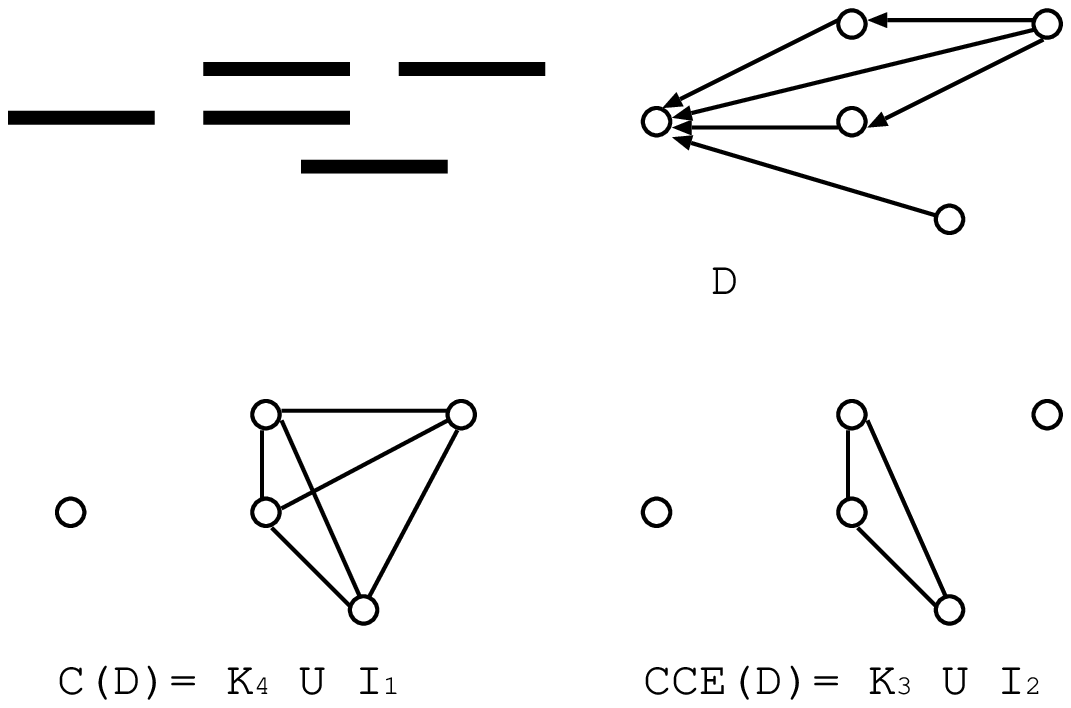}
 }
\end{center}
\caption{An interval order $D$, the competition graph $C(D)$, and 
the competition-common enemy graph $CCE(D)$}
\label{fig:interval}
\end{figure}

In this note, we characterize 
the competition-common enemy graphs of semiorders 
and interval orders as follows: 

\begin{Thm}\label{thm:main0}
Let $G$ be a graph. Then the following are equivalent. 
\begin{itemize}
\item[{\rm (a)}]
$G$ is the competition-common enemy graph 
of a semiorder, 
\item[{\rm (b)}]
$G$ is the competition-common enemy graph 
of an interval order, 
\item[{\rm (c)}]
$G = K_r \cup I_q$ where if $r \geq 2$ then $q \geq 2$. 
\qed
\end{itemize}
\end{Thm}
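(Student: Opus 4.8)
The plan is to mirror the strategy behind Theorem~\ref{thm:KR}: since semiorders form a subclass of interval orders, the implication (a)$\Rightarrow$(b) is immediate, and the bulk of the work lies in proving (b)$\Rightarrow$(c) and (c)$\Rightarrow$(a). I expect the later general characterization in terms of Conditions $C(p)$ and $C'(p)$ to subsume all three implications, but to keep this statement self-contained I would argue it directly.

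\medskip
\noindent\textbf{(b)$\Rightarrow$(c).} Suppose $G = CCE(D)$ for an interval order $D$ with interval assignment $J$. First I would observe that the competition-common enemy relation is contained in the competition relation, so by Theorem~\ref{thm:KR} we already know $G$ is a subgraph of some $K_r \cup I_q$; in particular every connected component of $G$ with an edge is a clique, so $G = K_{r} \cup I_{q}$ for some $r, q \ge 0$. The only thing left is to show that if $r \ge 2$ then $q \ge 2$, i.e.\ $D$ must have at least two vertices that are isolated in $CCE(D)$. The key idea: among the $r$ mutually adjacent vertices, order them by, say, the left endpoints $\min J(\cdot)$; the vertex $x_0$ with the smallest $\min J(x_0)$ has $N^-_D(x_0)$ contained in $N^-_D(z)$ for every other vertex $z$ of the clique (since $(w,x_0)\in A$ means $\max J(w) < \min J(x_0) \le \min J(z)$), and likewise the vertex $x_1$ with the largest $\max J(\cdot)$ dominates the out-neighborhoods. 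Now pick any common prey $v$ of the clique and any common predator $u$; then $u \to x_0$ and $x_1 \to v$ force $\max J(u) < \min J(x_0) \le \max J(x_1) < \min J(v)$, so $J(u)$ and $J(v)$ are disjoint, $(u,v) \in A$, and more to the point one checks that $u$ and $v$ themselves have empty out-competition (resp.\ in-competition) with every other vertex — they sit at the extreme ends of the order — hence $u$ and $v$ are two distinct vertices isolated in $CCE(D)$, giving $q \ge 2$. The main obstacle is making this extremal argument airtight: one must verify that the chosen $u$ and $v$ are genuinely isolated in $CCE(D)$ and are distinct (which is where the difference from the competition-graph case, $q\ge 1$ versus $q \ge 2$, really comes from — a common enemy and a common prey cannot coincide), and handle the degenerate cases $r = 0, 1$ separately.

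\medskip
\noindent\textbf{(c)$\Rightarrow$(a).} Given $G = K_r \cup I_q$ with $q \ge 2$ whenever $r \ge 2$, I would construct a semiorder realizing it by placing real numbers explicitly. Label the clique vertices $w_1, \dots, w_r$ and the isolated vertices $z_1, \dots, z_q$. Take $\delta = 1$ and assign $f$-values so that the $w_i$ all lie in a tiny cluster near $0$, one isolated vertex $z_1$ sits far to the left (around $-10$) and another $z_2$ far to the right (around $+10$), with the remaining $z_j$ scattered harmlessly far away; one then reads off the arcs from the rule $(x,y)\in A \iff f(x) > f(y)+1$ and checks that $z_2$ is a common out-neighbor of all the $w_i$ while $z_1$ is a common in-neighbor, so every pair $w_i w_j$ is an edge of $CCE(D)$, whereas no $z_j$ acquires both a common prey and a common predator with anyone because it is too far from the cluster. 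When $r \le 1$ the construction degenerates trivially (spread all vertices out so no two compete). This direction is routine once the right picture is fixed; the only care needed is ensuring the isolated vertices stay isolated in $CCE(D)$ and not merely in $C(D)$, which is exactly why two "buffer" vertices $z_1, z_2$ (rather than one) are required — precisely the content of the hypothesis $q \ge 2$.

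\medskip
\noindent Combining (a)$\Rightarrow$(b) (trivial, since semiorders are interval orders), (b)$\Rightarrow$(c), and (c)$\Rightarrow$(a) closes the cycle and proves the theorem. I anticipate the extremal isolation argument in (b)$\Rightarrow$(c) to be the delicate step; everything else is bookkeeping.
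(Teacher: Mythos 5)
Your overall architecture ((a)$\Rightarrow$(b) trivial, then (b)$\Rightarrow$(c) and (c)$\Rightarrow$(a)) matches the paper's, but where the paper proves (b)$\Rightarrow$(c) by checking that every interval order satisfies Conditions $C(2)$ and $C'(2)$ and then invoking Theorem~\ref{thm:acyclic} (whose case (c) is vacuous for $p=2$), you work directly with an interval representation. That route is viable, but as written it has a genuine gap at its first step: from ``$CCE(D)$ is a spanning subgraph of $C(D)=K_r\cup I_q$'' you conclude that every nontrivial component of $CCE(D)$ is a clique. This inference is invalid --- a subgraph of $K_r\cup I_q$ can have, say, a path as a nontrivial component. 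The clique structure must be argued separately, either as in Proposition~\ref{prop:clique} with $p=2$, or with your own extremal device: for non-isolated $a,b$, the one with the larger right endpoint has a prey, which is automatically a common prey, and the one with the smaller left endpoint has a predator, which is automatically a common predator, so $a$ and $b$ are adjacent. Relatedly, your claim that the chosen $u$ and $v$ are isolated ``because they sit at the extreme ends'' is not a proof: a common predator of only part of the non-isolated vertices can perfectly well be non-isolated. The correct reason (essentially the paper's argument in Theorem~\ref{thm:loopless}) is that $u$ and $v$ are common prey/predator of the set $S$ of \emph{all} non-isolated vertices, so if either were non-isolated it would lie in $S$ and hence be its own prey or predator, a loop, contradicting irreflexivity; and $u\neq v$ since otherwise $(s,u)$ and $(u,s)$ are both arcs and transitivity again forces a loop.

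The construction for (c)$\Rightarrow$(a) also needs repair: placing the remaining isolated vertices ``scattered harmlessly far away'' is not harmless. If some $z_3$ is put further out than $z_1$ (say at $-20$ with $z_1$ at $-10$ and the clique at $0$), then $z_1$ and each $w_i$ acquire the common prey $z_3$ and the common predator $z_2$, so $z_1$ joins the clique of $CCE(D)$; a symmetric failure occurs on the other side. No third level may be created: the extra isolated vertices must be co-located with one of the two buffers, which is exactly what the paper's construction does ($f_2(a)=2$ for a single isolated vertex $a$ and $f_2(b)=-2$ for \emph{all} the remaining isolated vertices).
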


\noindent
Furthermore, 
we also characterize the competition-common enemy graphs 
of digraphs satisfying Conditions $C(p)$ and $C'(p)$.

\section{Main Results}
\subsection{Conditions $C(p)$ and $C'(p)$}

\begin{Defi}
{\rm 
Let $D$ be a digraph. 
For a set $S$ of vertices in $D$, we define the following: 
\begin{eqnarray*}
\cF^+_D(S) &:=& 
\{x \in S \mid N^+_D(x) \subseteq N^+_D(y) \text{ for all } y \in S \}, \\
\cF^-_D(S) &:=& 
\{x \in S \mid N^-_D(x) \subseteq N^-_D(y) \text{ for all } y \in S \}, \\
\cH^+_D(S) &:=& 
\{x \in S \mid N^+_D(x) \supseteq N^+_D(y) \text{ for all } y \in S \}, \\
\cH^-_D(S) &:=& 
\{x \in S \mid N^-_D(x) \supseteq N^-_D(y) \text{ for all } y \in S \}. 
\end{eqnarray*}
(Note that, in \cite{KimRoberts}, 
an element in $\cF^+_D(S)$ is called a {\it foot} of $S$ and 
an element in $\cH^+_D(S)$ is called a {\it head} of $S$.) 

\begin{figure}[h]
\begin{center}
 \resizebox{10cm}{!}{
  \includegraphics{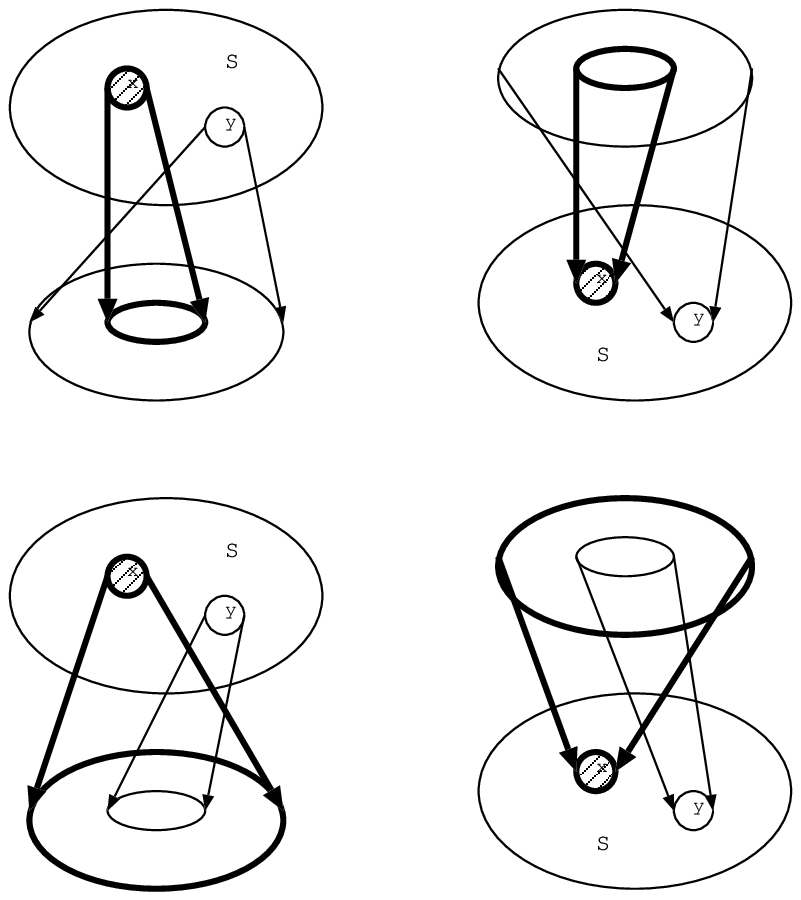}
 }
\end{center}
\caption{Elements $x$ in $\cF^+_D(S)$, $\cF^-_D(S)$, $\cH^+_D(S)$, 
and $\cH^-_D(S)$}
\label{fig:FS-HS}
\end{figure}

Let $p$ be a positive integer with $p \geq 2$. 
We say that {\it $D$ satisfies Condition $C(p)$} 
(resp. {\it Condition $C'(p)$}, {\it Condition $C^*(p)$}, 
{\it Condition ${C^*}'(p)$}) 
if the set $\cF^+_D(S)$ (resp. $\cF^-_D(S)$ $\cH^+_D(S)$ $\cH^-_D(S)$) 
is not empty 
for any set $S$ of $p$ vertices in the digraph $D$. 
}
\end{Defi}

\begin{Prop}[\cite{KimRoberts}]\label{prop:C(p)}
Let $2 \leq p < q$. 
If a digraph $D$ satisfies Condition $C(p)$, 
then the digraph $D$ also satisfies Condition $C(q)$. 
\end{Prop}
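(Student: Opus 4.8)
The plan is to prove the contrapositive in a strong form: I will show that if $D$ satisfies Condition $C(p)$ then in fact $\mathcal F^+_D(S) \neq \emptyset$ for every set $S$ of vertices with $|S| \geq p$, from which the statement for $|S| = q$ is immediate. Equivalently, it suffices to prove the single-step claim that $C(p)$ implies $C(p+1)$; the general case $2 \leq p < q$ then follows by an obvious induction on $q$. So the real content is: assuming every $p$-element vertex set has a foot, show every $(p+1)$-element vertex set has a foot.

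The key observation I would use is that the relation ``$N^+_D(x) \subseteq N^+_D(y)$'' is a preorder (reflexive and transitive) on the vertex set of $D$; call it $x \preceq y$. An element $x$ is a foot of $S$ precisely when $x$ is a minimum of $S$ with respect to $\preceq$ (not merely a minimal element). Now take any set $T$ with $|T| = p+1$. Pick an arbitrary vertex $v \in T$ and apply Condition $C(p)$ to the $p$-element set $S := T \setminus \{v\}$ to obtain a foot $x$ of $S$, so $x \preceq y$ for all $y \in S$. It remains to compare $x$ and $v$. Apply Condition $C(p)$ to some $p$-element subset of $T$ that contains both $x$ and $v$ — for instance $S' := (T \setminus \{v\}) \setminus \{w\} \cup \{v\}$ for a suitably chosen $w \in S \setminus \{x\}$ (this is possible since $|S| = p \geq 2$, so $S$ has an element other than $x$). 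The foot $x'$ of $S'$ satisfies $x' \preceq x$ and $x' \preceq v$; combined with the fact that $x$ was already $\preceq$ everything in $S$, one deduces that whichever of $x, x'$ is ``lower'' serves as a foot of all of $T$.

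I should be a little careful about the bookkeeping in the last step, since a priori the two applications of $C(p)$ could produce different feet and $\preceq$ is only a preorder (antisymmetry can fail). The clean way: from the first application, $x \preceq y$ for all $y \in T \setminus\{v\}$. From the second application applied to a $p$-set $S'$ containing $x$ and $v$, its foot $x'$ has $x' \preceq x$ and $x' \preceq v$. Since $x' \preceq x \preceq y$ for all $y \in T\setminus\{v\}$, transitivity gives $x' \preceq y$ for all $y \in T \setminus \{v\}$ as well, and $x' \preceq v$; and $x' \preceq x'$. Hence $x'$ is a foot of the whole set $T$, so $\mathcal F^+_D(T) \neq \emptyset$. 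This establishes $C(p) \Rightarrow C(p+1)$.

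I expect the only genuine obstacle is the verification that a $p$-element subset $S'$ of $T$ containing both $x$ and $v$ actually exists and that the transitivity chaining is valid — both of which reduce to the hypothesis $p \geq 2$ (which is part of the definition of Condition $C(p)$) and to transitivity of set inclusion. There is no deep idea beyond recognizing ``foot $=$ minimum in the inclusion preorder'' and exploiting transitivity; the rest is elementary. Finally, to get the stated range $2 \leq p < q$ rather than just $q = p+1$, induct: $C(p) \Rightarrow C(p+1) \Rightarrow \cdots \Rightarrow C(q)$.
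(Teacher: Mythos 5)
Your argument is correct, and it is essentially the same as the one the paper gives for the mirror statement about Condition $C'(p)$ (Lemma \ref{lem:FD} together with Proposition \ref{prop:C'(p)}): reduce to the step $C(p)\Rightarrow C(p+1)$, obtain a foot $x$ of a $p$-subset, then apply $C(p)$ to a second $p$-subset containing both $x$ and the omitted vertex and chain the inclusions by transitivity, exactly as Lemma \ref{lem:FD} does. The only quibble is that you call this ``the contrapositive,'' when it is in fact a direct proof; the mathematics is unaffected.
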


\begin{Lem}\label{lem:FD}
Let $D$ be a digraph and $T, U$ be sets of vertices in $D$. 
If $\cF^-_D(T) \cap U \neq \emptyset$, 
then $\cF^-_D(U) \subseteq \cF^-_D(T \cup U)$. 
\end{Lem}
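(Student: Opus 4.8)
The plan is to unwind the definition of $\cF^-_D$ and verify the required set inclusions directly, with no machinery needed beyond transitivity of $\subseteq$. Fix a vertex $w \in \cF^-_D(T) \cap U$, which exists by hypothesis, and let $x \in \cF^-_D(U)$ be arbitrary. Since $x \in U \subseteq T \cup U$, to conclude $x \in \cF^-_D(T \cup U)$ it suffices to show that $N^-_D(x) \subseteq N^-_D(z)$ holds for every $z \in T \cup U$.

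I would split into two cases according to whether $z$ lies in $U$ or in $T$. If $z \in U$, the inclusion $N^-_D(x) \subseteq N^-_D(z)$ is immediate from $x \in \cF^-_D(U)$. If $z \in T$, I would chain two inclusions: from $x \in \cF^-_D(U)$ and $w \in U$ we get $N^-_D(x) \subseteq N^-_D(w)$, while from $w \in \cF^-_D(T)$ and $z \in T$ we get $N^-_D(w) \subseteq N^-_D(z)$; composing these gives $N^-_D(x) \subseteq N^-_D(z)$. Since the two cases exhaust $T \cup U$, the verification is complete.

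There is essentially no obstacle here: the statement is a formal consequence of the transitivity of set inclusion together with the dual role played by the witness $w$, which simultaneously belongs to $U$ (hence is ``dominated from below'' by every foot of $U$) and to $\cF^-_D(T)$ (hence ``dominates from below'' every element of $T$). The only minor point to keep in mind is to record explicitly that $x \in T \cup U$, which holds trivially since $x \in U$.
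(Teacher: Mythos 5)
Your proof is correct and follows essentially the same argument as the paper: pick a witness in $\cF^-_D(T)\cap U$ and chain the two inclusions through it, the only cosmetic difference being that the paper splits $T\cup U$ as $(T\setminus U)\cup U$ while you split into $z\in U$ and $z\in T$. The empty-$\cF^-_D(U)$ case, which the paper notes explicitly, is handled vacuously by your ``arbitrary $x$'' phrasing, so nothing is missing.
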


\begin{proof}
Take $t \in \cF^-_D(T) \cap U$. 
Then $N^-_D(t) \subseteq N^-_D(t')$ for any $t' \in T \setminus U$. 
If $\cF^-_D(U)$ is empty, then the lemma trivially holds. 
So we assume that $\cF^-_D(U) \neq \emptyset$. 
Take any $u \in \cF^-_D(U)$. 
Then $N^-_D(u) \subseteq N^-_D(u')$ for any $u' \in U$. 
Since $t \in U$, we have $N^-_D(u) \subseteq N^-_D(t)$. 
Therefore, $N^-_D(u) \subseteq 
N^-_D(t')$ for any $t' \in T \setminus U$. 
Thus $N^-_D(u) \subseteq 
N^-_D(s)$ for any $s \in (T \setminus U) \cup U= T \cup U$. 
Hence the lemma holds. 
\end{proof}

\begin{Prop}\label{prop:C'(p)}
Let $2 \leq p < q$. 
If a digraph $D$ satisfies Condition $C'(p)$, 
then the digraph $D$ also satisfies Condition $C'(q)$. 
\end{Prop}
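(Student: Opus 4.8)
The plan is to mimic the proof strategy that \cite{KimRoberts} must have used for Proposition~\ref{prop:C(p)} (the out-neighbor version), adapting it via Lemma~\ref{lem:FD} which is precisely the in-neighbor tool we have been handed. So suppose $D$ satisfies Condition $C'(p)$ and let $S$ be an arbitrary set of $q$ vertices with $q > p$. I want to show $\cF^-_D(S) \neq \emptyset$. The natural approach is induction on $q$: it suffices to prove the single step that Condition $C'(q-1)$ implies Condition $C'(q)$ whenever $q-1 \geq p \geq 2$, and then iterate from $p$ up to $q$.

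For the inductive step, I would take the $q$-element set $S$ and pick any vertex $v \in S$; write $S = S' \cup \{v\}$ where $|S'| = q-1 \geq p \geq 2$. By the inductive hypothesis (Condition $C'(q-1)$) we have $\cF^-_D(S') \neq \emptyset$; fix $w \in \cF^-_D(S')$. Now consider the two-element set $T := \{v, w\}$ — here I need $q \geq 3$ so that passing to a $2$-element set is legitimate under Condition $C'(2)$; the base case $p=2$, $q=3$ is the first instance and the general $q > p$ case always has $q \geq 3$, so this is fine. Wait — more carefully: I should apply Condition $C'(2)$, which holds since $D$ satisfies $C'(p)$ with $p=2$ only when $p=2$; if $p > 2$ I instead want to build a $p$-element set. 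Let me restructure: form $U := \{v\} \cup \{w\}$ won't have $p$ elements in general. The cleaner route is: let $w \in \cF^-_D(S')$ by induction, then apply Condition $C'(q-1)$ again — or rather Condition $C'(p)$ directly — to a $p$-subset of $S'$ containing... Actually the slick argument is: set $T = S'$ and $U$ to be any $p$-element subset of $S$ containing $w$ (possible since $q-1 \geq p$, so $|S'| \geq p$, and we may even take $U \subseteq S'$, or include $v$). Choose $U$ to be a $p$-element subset of $S$ with $w \in U$; since $D$ satisfies $C'(p)$, $\cF^-_D(U) \neq \emptyset$. Because $w \in \cF^-_D(S') \cap U$ — we need $w \in U$, which we arranged — wait, Lemma~\ref{lem:FD} needs $\cF^-_D(T) \cap U \neq \emptyset$ with $T = S'$, i.e. $w \in \cF^-_D(S') \cap U$: yes. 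Then Lemma~\ref{lem:FD} gives $\cF^-_D(U) \subseteq \cF^-_D(S' \cup U) = \cF^-_D(S)$ provided $S' \cup U = S$. So I must choose $U$ to contain $v$: let $U$ be any $p$-element subset of $S$ with $v \in U$ and $w \in U$ (possible once $p \geq 2$ and both lie in the $q$-set $S$). Hmm, but then I need $w \in \cF^-_D(S')$ — fine — and $\cF^-_D(U) \cap S'$? No: Lemma~\ref{lem:FD} as stated is $\cF^-_D(T) \cap U \neq \emptyset \Rightarrow \cF^-_D(U) \subseteq \cF^-_D(T \cup U)$; with $T = S'$, $U = U$, we get $\cF^-_D(U) \subseteq \cF^-_D(S' \cup U) = \cF^-_D(S)$ since $v \in U$ ensures $S' \cup U = S$. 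And the hypothesis $\cF^-_D(S') \cap U \neq \emptyset$ holds because $w$ lies in both. Since $\cF^-_D(U) \neq \emptyset$ by Condition $C'(p)$, we conclude $\cF^-_D(S) \neq \emptyset$.

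So the key steps, in order, are: (1) reduce to the one-step claim "$C'(q-1) \Rightarrow C'(q)$ for $q - 1 \geq p$'' by induction on $q$; (2) given a $q$-set $S$, peel off a vertex $v$ to get $S'$ of size $q-1$ and invoke the inductive hypothesis to get $w \in \cF^-_D(S')$; (3) choose a $p$-element subset $U \subseteq S$ containing both $v$ and $w$, which exists since $|S| = q > p \geq 2$; (4) apply Condition $C'(p)$ to get $\cF^-_D(U) \neq \emptyset$, and apply Lemma~\ref{lem:FD} with $T = S'$ to get $\cF^-_D(U) \subseteq \cF^-_D(S' \cup U) = \cF^-_D(S)$, hence $\cF^-_D(S) \neq \emptyset$.

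I expect the main obstacle to be purely bookkeeping: making sure the subset $U$ can simultaneously have size exactly $p$, contain $v$, and contain $w$, and verifying $S' \cup U = S$ so that Lemma~\ref{lem:FD} delivers $\cF^-_D(S)$ rather than $\cF^-_D$ of a proper subset. All of this goes through as soon as $q > p \geq 2$. An alternative, even more economical, would be to bypass the induction and argue directly with $T = S'$ for a single well-chosen $(q-1)$-subset together with a $p$-subset $U$, but one still needs $\cF^-_D(S') \neq \emptyset$, which itself requires either the induction or the observation $q - 1 \geq p$ and a further application — so the induction is the honest way. There is no serious analytic content here; Lemma~\ref{lem:FD} is doing all the real work, exactly as designed, and this Proposition is its intended first application.
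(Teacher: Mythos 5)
Your argument is correct and is essentially the paper's own proof: both reduce to an incremental step, pick a foot $w$ of a large subset $T$ (the paper uses a $p$-subset of a $(p{+}1)$-set, you use $S'=S\setminus\{v\}$), choose a $p$-subset $U$ containing $w$ with $T\cup U=S$, and apply Lemma~\ref{lem:FD} together with Condition $C'(p)$ on $U$. The only cosmetic difference is that you make the induction on $q$ explicit and take the step $q-1\to q$ directly, whereas the paper states only the step $p\to p+1$ and leaves the iteration implicit.
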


\begin{proof}
It is enough to show that 
$D$ satisfies Condition $C'(p+1)$. 
Let $S$ be any set of $p+1$ vertices of $D$, and 
let $T$ be a subset of $S$ with $|T|=p$. 
Then $\cF^-_D(T) \neq \emptyset$ 
since $D$ satisfies Condition $C'(p)$. 
Take an element $x$ in $\cF^-_D(T)$. 
Let $U$ be a subset of $S$ such that $|U|=p$ and $x \in U$. 
Since $p \geq 2$, it holds that $T \cup U = S$. 
By Lemma \ref{lem:FD}, 
we have $\cF^-_D(U) \subseteq \cF^-_D(T \cup U) = \cF^-_D(S)$. 
Since $D$ satisfies Condition $C'(p)$, 
$\cF^-_D(U) \neq \emptyset$. 
Thus $\cF^-_D(S)$ is not empty. 
\end{proof}

For a graph $G$, we denote the set of all isolated vertices in $G$ 
by $I_G$. Then the graph $G - I_G$ is the union of the nontrivial 
connected components of $G$.

\begin{Prop}\label{prop:clique}
Let $G$ be the competition-common enemy graph of 
a digraph $D$ which satisfies 
Conditions $C(p)$ and $C'(p)$ for some $p \geq 2$. 
Suppose that $G-I_G$ has at least $p$ vertices. 
Then $G-I_G$ is a clique of $G$. 
\end{Prop}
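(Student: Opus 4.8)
The plan is to show that any two non-isolated vertices $x,y$ of $G = CCE(D)$ are adjacent in $G$, which forces $G - I_G$ to be a clique. Fix distinct $x, y \in V(G) \setminus I_G$. Since each of $x$ and $y$ is non-isolated, there are vertices $a, b$ (not necessarily distinct from each other) with $xa, yb \in E(G)$; by definition of the competition-common enemy graph this means $N^+_D(x) \cap N^+_D(a) \neq \emptyset$, $N^-_D(x) \cap N^-_D(a) \neq \emptyset$, and likewise for $y$ and $b$. In particular $N^+_D(x), N^-_D(x), N^+_D(y), N^-_D(y)$ are all nonempty. The idea is to build a set $S$ of $p$ vertices containing $x$ and $y$, apply Conditions $C(p)$ and $C'(p)$ to $S$, and combine the resulting nestings with the nonemptiness of these neighborhoods to produce common out- and in-neighbors of $x$ and $y$.

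The key step is the choice of $S$. First I would take $S$ to consist of $x$, $y$, and $p-2$ further vertices chosen from $N^+_D(x) \cup N^+_D(y) \cup N^-_D(x) \cup N^-_D(y) \cup \{a, b\}$, padding arbitrarily with other vertices of $D$ if necessary (here I use that $G - I_G$, hence $D$, has at least $p$ vertices; one should check that enough distinct vertices are available, which is where the hypothesis $|V(G-I_G)| \ge p$ is used, possibly together with the neighborhood vertices just produced). Condition $C(p)$ gives some $z \in \cF^+_D(S)$, so $N^+_D(z) \subseteq N^+_D(w)$ for all $w \in S$, in particular $N^+_D(z) \subseteq N^+_D(x) \cap N^+_D(y)$. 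If I can guarantee $N^+_D(z) \neq \emptyset$, then $N^+_D(x) \cap N^+_D(y) \neq \emptyset$. Symmetrically, Condition $C'(p)$ gives $z' \in \cF^-_D(S)$ with $N^-_D(z') \subseteq N^-_D(x) \cap N^-_D(y)$, and if $N^-_D(z') \neq \emptyset$ then $N^-_D(x) \cap N^-_D(y) \neq \emptyset$. Together these two facts say exactly that $xy \in E(G)$.

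So the main obstacle is ensuring that the foot $z$ has a nonempty out-neighborhood and the foot $z'$ has a nonempty in-neighborhood. For $z$: if $z$ had $N^+_D(z) = \emptyset$, I would want to have arranged $S$ so that this is impossible, or else handle it separately. One clean approach: note that since $N^+_D(x) \neq \emptyset$ and $N^+_D(y) \neq \emptyset$, if we are unlucky and $\cF^+_D(S)$ only contains vertices with empty out-neighborhood, we instead re-examine the structure — but actually a better route is to choose $S$ so that \emph{every} vertex of $S$ has nonempty out-neighborhood and nonempty in-neighborhood. This is possible precisely because the relevant vertices $x, y$ have all four neighborhoods nonempty, and the padding vertices can be taken from inside those neighborhoods or among non-isolated vertices of $G$ (a non-isolated vertex of $CCE(D)$ automatically has nonempty $N^+$ and $N^-$). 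With every vertex of $S$ having nonempty out-neighborhood, the foot $z \in \cF^+_D(S) \subseteq S$ in particular satisfies $\emptyset \neq N^+_D(z) \subseteq N^+_D(x) \cap N^+_D(y)$; symmetrically for in-neighborhoods. I would therefore organize the proof as: (i) collect the four nonempty neighborhoods of $x$ and $y$; (ii) carefully choose a $p$-set $S \ni x, y$ all of whose members have nonempty in- and out-neighborhoods, invoking $|V(G - I_G)| \ge p$ for the count; (iii) apply $C(p)$ and $C'(p)$ to $S$ and read off common neighbors; (iv) conclude $xy \in E(G)$, hence $G - I_G$ is a clique. The delicate point, and the one warranting the most care in the write-up, is step (ii) — verifying that such an $S$ exists, i.e. that there really are at least $p$ suitable vertices, since this is where the numerical hypothesis of the proposition does its work.
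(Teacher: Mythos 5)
Your proposal is correct and is essentially the paper's own argument: the paper takes $S$ to be any $p$-subset of $V(G-I_G)$ containing the two given vertices (possible by the hypothesis $|V(G-I_G)|\ge p$), notes that the feet $x\in\cF^+_D(S)$ and $y\in\cF^-_D(S)$ are non-isolated in $CCE(D)$ and hence have nonempty out- and in-neighborhoods, and reads off a common out-neighbor and a common in-neighbor exactly as in your steps (iii)--(iv). Your ``delicate point'' (ii) resolves immediately to this choice, since every vertex of $G-I_G$ automatically has nonempty $N^+_D$ and $N^-_D$; the padding from neighborhoods you first contemplated is unnecessary.
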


\begin{proof} 
Take any two vertices $a$ and $b$ in $G-I_G$. 
Then $a$ and $b$ are not isolated. 
Let $S$ be a set of $p$ vertices in $G-I_G$ 
containing the vertices $a$ and $b$. 
Since $D$ satisfies Conditions $C(p)$ and $C'(p)$, 
there exist $x \in \cF^+_D(S)$ and $y \in \cF^-_D(S)$. 
Note that $x$ and $y$ are not isolated vertices. 
Take $u \in N^+_D(x)$ and $v \in N^-_D(y)$. 
By Condition $C(p)$, we have $u \in N^+_D(a) \cap N^+_D(b)$. 
By Condition $C'(p)$, we have $v \in N^-_D(a) \cap N^-_D(b)$. 
Therefore $a$ and $b$ are adjacent in $G-I_G$. 
Hence the proposition holds. 
\end{proof}

\subsection{Classifiation}

\begin{Thm}\label{thm:loopless}
Let $G$ be a graph and $p \geq 2$. 
Suppose that $G-I_G$ has at least $p$ vertices. 
Then $G$ is the competition-common enemy graph of 
a loopless digraph satisfying Conditions $C(p)$ and $C'(p)$ 
if and only if 
$G = K_r \cup I_q$ with $r \geq p$ and $q \geq 2$. 
\end{Thm}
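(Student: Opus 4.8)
The plan is to prove both implications. Proposition~\ref{prop:clique} already forces $G-I_G$ to be a clique, so the substantive content is the lower bound $q\ge 2$ in the forward direction and an explicit construction in the reverse one. I would begin by isolating an elementary fact: \emph{if a digraph $D$ is loopless, satisfies Condition $C(p)$, and has at least $p$ vertices, then $D$ has a vertex $z$ with $N^+_D(z)=\emptyset$ (a ``sink'')}. Indeed, by Proposition~\ref{prop:C(p)} (trivially if $|V(D)|=p$) the digraph $D$ satisfies $C(|V(D)|)$, so $\cF^+_D(V(D))\neq\emptyset$; taking $z$ in it gives $N^+_D(z)\subseteq N^+_D(y)$ for every $y$, and if some $u$ lay in $N^+_D(z)$ then $u\in N^+_D(u)$, contradicting looplessness. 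Dually, using Condition $C'(p)$ and Proposition~\ref{prop:C'(p)}, such a $D$ has a ``source'', a vertex with empty in-neighbourhood.

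For necessity, let $G=CCE(D)$ with $D$ loopless satisfying $C(p)$ and $C'(p)$ and with $|V(G-I_G)|\ge p$; then $|V(D)|\ge p$, so $D$ has a sink $z^+$ and a source $z^-$, and both are isolated in $G$ (a sink has no common out-neighbour, a source no common in-neighbour, with any other vertex). If $z^+\neq z^-$ then $|I_G|\ge 2$. Otherwise $w:=z^+=z^-$ is a totally isolated vertex of $D$; then $w\in I_G$, so $|V(D)|\ge|V(G-I_G)|+1\ge p+1$, and deleting $w$ alters no neighbourhood of the remaining vertices, so $D-w$ is loopless, satisfies $C(p)$, and has at least $p$ vertices. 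Hence $D-w$ has a sink $z'$, which is a sink of $D$ distinct from $w$, and again $|I_G|\ge 2$. So $q:=|I_G|\ge 2$; combined with Proposition~\ref{prop:clique}, which gives $G-I_G=K_r$ with $r=|V(G-I_G)|\ge p$, this yields $G=K_r\cup I_q$ with $r\ge p$ and $q\ge 2$.

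For sufficiency, suppose $G=K_r\cup I_q$ with $r\ge p$ and $q\ge 2$; write $a_1,\dots,a_r$ for the clique vertices and $b_1,\dots,b_q$ for the isolated ones. Let $D$ be the digraph on $\{a_1,\dots,a_r,b_1,\dots,b_q\}$ with arcs $a_i\to b_1$ for all $i$, $b_j\to a_i$ for all $j\ge 2$ and all $i$, and $b_j\to b_1$ for all $j\ge 2$; equivalently, $D$ is the semiorder with $\delta=1$, $f(a_i)=0$, $f(b_1)=-2$, and $f(b_j)=2$ for $j\ge 2$. Then $D$ is loopless; its out-neighbourhoods are $\emptyset\subseteq\{b_1\}\subseteq\{a_1,\dots,a_r,b_1\}$ and its in-neighbourhoods are $\emptyset\subseteq\{b_2,\dots,b_q\}\subseteq\{a_1,\dots,a_r,b_2,\dots,b_q\}$, each family totally ordered by inclusion, so $D$ satisfies $C(2)$ and $C'(2)$, hence $C(p)$ and $C'(p)$ by Propositions~\ref{prop:C(p)} and~\ref{prop:C'(p)}. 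Reading off the arc set, in $CCE(D)$ the vertices $a_1,\dots,a_r$ are pairwise adjacent (they share the out-neighbour $b_1$ and, since $q\ge 2$, the in-neighbour $b_2$), while $b_1$ (empty out-neighbourhood) and each $b_j$ with $j\ge 2$ (empty in-neighbourhood) are isolated; thus $CCE(D)=K_r\cup I_q=G$.

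The main obstacle is the inequality $q\ge 2$ in the necessity direction; the clique shape is quoted from Proposition~\ref{prop:clique} and the construction for sufficiency is routine. The delicate point is that one cannot simply delete a sink of $D$ and induct, because a sink can be the common out-neighbour of many vertices — in the construction above, $b_1$ plays exactly this role — so removing it perturbs their out-neighbourhoods; the case split according to whether some sink coincides with some source is precisely what makes the deletion harmless, since only a totally isolated vertex may be removed without disturbing any neighbourhood. Getting that bookkeeping right (and checking that $D-w$ still satisfies both conditions and still has at least $p$ vertices) is the step most likely to hide a slip.
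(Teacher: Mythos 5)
Your proof is correct, and while the sufficiency direction is in the same spirit as the paper's (an explicit semiorder-like construction; the paper uses a single source $a$ and a single sink $b$ among the isolated vertices, you use one sink $b_1$ and $q-1$ sources), your necessity argument is organized quite differently. The paper assumes $q\le 1$ and derives a contradiction by picking $x\in\cF^+_D(S)$, $y\in\cF^-_D(S)$ for $S=V(G-I_G)$, choosing $u\in N^+_D(x)$, $v\in N^-_D(y)$, and running a case analysis on whether $u=v$ and on which of $u,v$ lies in $S$, each branch ending in a forbidden loop. You instead isolate a reusable lemma --- a loopless digraph on at least $p$ vertices satisfying $C(p)$ has a sink, and dually a source, both necessarily isolated in the competition-common enemy graph --- and then directly exhibit two distinct isolated vertices, handling the degenerate case where the sink and source coincide by deleting that (totally isolated) vertex and extracting a second sink from $D-w$. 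Both arguments hinge on the same underlying trick (if $x\in\cF^+_D(S)$ and $u\in N^+_D(x)\cap S$ then $(u,u)\in A(D)$), but your version applies it globally to $V(D)$ rather than to ad hoc $p$-sets, which replaces the paper's four-way case split with a cleaner structural statement at the cost of the extra deletion step; your verification that $D-w$ still satisfies $C(p)$, remains loopless, has at least $p$ vertices, and that a sink of $D-w$ is a sink of $D$ (because the source $w$ lies in no out-neighbourhood) is carried out correctly. As a small bonus, your construction realizes $G$ as the competition-common enemy graph of a semiorder, which the theorem does not require but which connects nicely to Theorem \ref{thm:main0}.
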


\begin{proof} 
Fisrt, we show the ``only if" part. 
Let $G$ be the competition-common enemy graph of 
a loopless digraph $D$ satisfying Conditions $C(p)$ and $C'(p)$. 
Proposition \ref{prop:clique} shows 
that $G = K_r \cup I_q$ with $r \geq p$ and $q \geq 0$. 
Suppose that $q = 0$ or $q=1$. 
Since $r \geq p$, 
by Propositions \ref{prop:C(p)} and \ref{prop:C'(p)}, 
$D$ satisfies Conditions $C(r)$ and $C'(r)$. 
Let $x \in \cF^+_D(S)$ and $y \in \cF^-_D(S)$ where $S:=V(G-I_G)$. 
Since $x$ and $y$ are not isolated in $G$, 
we have $N^+_D(x) \neq \emptyset$ and $N^-_D(y) \neq \emptyset$. 
Let $u \in N^+_D(x)$ and $v \in N^-_D(y)$. 
If $u=v$, then $(s,u) \in A(D)$ and $(u,s) \in A(D)$ for any $s \in S$. 
Let $S'$ be a set of $p$ vertices containing the vertex $u$. 
Note that $S' \setminus\{u\} \subseteq S$ since $q \leq 1$. 
By Condition $C(p)$, $\cF^+_D(S') \neq \emptyset$. 
If $u \in \cF^+_D(S')$, then 
we have $s \in N^+_D(u) \subseteq N^+_D(s)$ for $s \in S' \setminus\{u\}$, 
i.e., $(s,s) \in A(D)$, 
which contradicts that $D$ is loopless. 
If $s \in \cF^+_D(S')$ for some $s \in S' \setminus\{u\}$, then 
we have $u \in N^+_D(s) \subseteq N^+_D(u)$, i.e., $(u,u) \in A(D)$, 
which contradicts that $D$ is loopless. 
Therefore $u$ and $v$ must be distinct. 
Since $q \leq 1$, at least one of $u$ and $v$ is in $S=V(G-I_G)$. 
If $u \in S$, then 
we have $u \in N^+_D(x) \subseteq N^+_D(u)$, i.e., $(u,u) \in A(D)$. 
If $v \in S$, then 
we have $v \in N^-_D(y) \subseteq N^-_D(v)$, i.e., $(v,v) \in A(D)$. 
In any case, we reach a contradiction. 
Thus we have $q \geq 2$. 

Second, we show the ``if" part. 
Let $G = K_r \cup I_q$ with $r \geq p$ and $q \geq 2$.
Let $a$ and $b$ be distinct vertices in $I_q$. 
We define a digraph $D$ by 
$V(D):=V(G)$ and 
$A(D):= 
\{(a, x) \mid x \in V(K_r) \} 
\cup
\{(x, b) \mid x \in V(K_r) \} 
\cup \{(a,b)\}$. 
Then $D$ is loopless, $D$ satisfies Conditions 
$C(p)$ and $C'(p)$, and 
the competition-common enemy graph of $D$ is equal to $G$. 
\end{proof}

The {\it double competition number} $dk(G)$ of a graph $G$ 
is the minimum number $k$ such that $G$ with $k$ new isolated vertices 
is the competition-common enemy graph of an acyclic digraph. 


\begin{Thm}\label{thm:acyclic}
Let $G$ be a graph and $p \geq 2$. 
If $G$ is the competition-common enemy graph of an 
acyclic digraph $D$ satisfying Conditions $C(p)$ 
and $C'(p)$, then  
$G$ is one of the following graphs:
\begin{itemize}
\item[{\rm (a)}] 
$I_q$ $(q \geq 1)$, 
\item[{\rm (b)}] 
$K_r \cup I_q$ $(r \geq p$, $q \geq 2)$, 
\item[{\rm (c)}]
$H \cup I_q$ where $|V(H)| < p$, $I_H=\emptyset$, and 
$q \geq dk(H)$. 
\end{itemize}
\end{Thm}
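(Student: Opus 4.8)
The plan is to argue by a case analysis on the number $n := |V(G - I_G)|$ of non-isolated vertices of $G$. Before splitting into cases I would record that $G - I_G$ itself has no isolated vertices: a vertex of $G - I_G$ has a neighbour in $G$, and that neighbour is again non-isolated, hence also lies in $G - I_G$. Consequently $n \neq 1$, so the three ranges $n = 0$, $\;2 \leq n \leq p-1$, and $\;n \geq p$ are exhaustive, and I will show that they correspond to outcomes (a), (c), and (b) respectively. Throughout I would use that an acyclic digraph is loopless.

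If $n = 0$, then every vertex of $G$ is isolated and $G = I_q$ with $q \geq 1$ (we may assume $D$, hence $G$, is nonempty), giving outcome (a). If $n \geq p$, then $G - I_G$ has at least $p$ vertices and $D$ is a loopless digraph satisfying Conditions $C(p)$ and $C'(p)$, so the ``only if'' direction of Theorem~\ref{thm:loopless} applies verbatim and yields $G = K_r \cup I_q$ with $r \geq p$ and $q \geq 2$, i.e.\ outcome (b). (Equivalently, Proposition~\ref{prop:clique} gives $G - I_G = K_r$ with $r \geq p$, and then the loop-freeness argument inside the proof of Theorem~\ref{thm:loopless} forces $q \geq 2$.)

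Finally, if $2 \leq n \leq p-1$, I would set $H := G - I_G$ and $q := |I_G|$, so that $G = H \cup I_q$ with $|V(H)| = n < p$ and $I_H = \emptyset$ by the opening observation. Since $G$ is the competition-common enemy graph of the acyclic digraph $D$, the graph $H$ together with these $q$ new isolated vertices is the competition-common enemy graph of an acyclic digraph; by the minimality built into the definition of the double competition number $dk(H)$, this forces $q \geq dk(H)$. This is precisely outcome (c), completing the case analysis.

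I do not expect a genuinely hard step: the real content has already been isolated in Proposition~\ref{prop:clique} / Theorem~\ref{thm:loopless} and in the definition of $dk$. The only places needing care are verifying that the three ranges of $n$ are exhaustive — in particular ruling out $n = 1$ — and checking that the parameter constraints in each case ($r \geq p$ and $q \geq 2$ for (b); $|V(H)| < p$, $I_H = \emptyset$ and $q \geq dk(H)$ for (c)) line up exactly with the hypotheses and conclusions of the results being quoted.
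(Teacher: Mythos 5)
Your proposal is correct and follows essentially the same route as the paper: split on whether $G-I_G$ is empty, has at least $p$ vertices (apply the ``only if'' direction of Theorem~\ref{thm:loopless}, noting acyclic implies loopless), or has fewer than $p$ vertices (use the minimality in the definition of $dk(H)$ to get $q \geq dk(H)$). The extra observation that $|V(G-I_G)| \neq 1$ is a harmless refinement the paper leaves implicit.
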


\begin{proof}
Let $G$ be the competition-common enemy graph of 
an acyclic digraph $D$ satisfying 
Conditions $C(p)$ and $C'(p)$. 
If there is no nontrivial connected component in $G$, then (a) holds. 
Let $H$ be the union of all nontrivial connected components of $G$. 
Then we have $G = H \cup I_q$ with $q \geq 0$ and $I_H=\emptyset$. 
If $H$ has at least $p$ vertices, 
then it follows from Theorem \ref{thm:loopless} that 
$G=K_r \cup I_q$ with $r \geq p$ and $q \geq 2$, i.e., (b) holds. 
Suppose that 
the number of the vertices of $H$ is less than $p$. 
Since $G$ is the competition-common enemy graph 
of an acyclic digraph $D$, 
the double competition number 
$dk(G)$ of $G$ is equal to $0$. 
Therefore, there must be at least $dk(H)$ vertices in $I_q$. 
Hence (c) holds. 
\end{proof}

\subsection{Proof of Theorem \ref{thm:main0}}

\begin{proof}[Proof of Theorem \ref{thm:main0}]

(a) $\Rightarrow$ (b): 
Since semiorders are a special case of interval orders 
where every interval has the same length, 
(a) implies (b). 

(b) $\Rightarrow$ (c): 
We can easily check that any 
interval order satisfies Conditions 
$C(2)$ and $C'(2)$. 
By Theorem \ref{thm:acyclic} with $p=2$, 
we can conclude that if (b) then (c). 

(c) $\Rightarrow$ (a): 
Suppose that 
$G = I_q$ $(q \geq 1)$ or 
$G = K_r \cup I_q$ $(r \geq 2$, $q \geq 2)$. 
When $G = I_q$, 
we let $f_1(x):=0$ for any $x \in V(G)$ and let $\delta_1:=1$. 
Then $G$ is the competition-common enemy graph of the semiorder 
defined by $f_1$ and $\delta_1$. 
When $G = K_r \cup I_q$, 
we take a vertex $a$ in $I_q$, 
let $f_2(x):=0$ for any $x \in V(K_r)$, 
$f_2(a):=2$, and 
$f_2(b):=-2$ for any $b \in V(I_q) \setminus \{a\}$,  
and let $\delta_2:=1$. 
Then $G$ is the competition-common enemy graph of the semiorder 
defined by $f_2$ and $\delta_2$. 

Hence Theorem \ref{thm:main0} holds. 
\end{proof}

\section{Concluding Remarks}

In this section, 
we present some problems for further study. 

In Theorem \ref{thm:loopless}, 
we gave a characterization of the competition common-enemy 
graphs $G$ of digraphs satisfying Conditions $C(p)$ and $C'(p)$ 
if the number of the vertices of $G-I_G$ is at least $p$. 

\begin{Prob}
Characterize 
the competition-common enemy graphs $G$ 
of digraphs satisfying Conditions 
$C(p)$ and $C'(p)$ when 
the number of the vertices of $G-I_G$ is less than $p$. 
\end{Prob}

In this note, we didn't consider Conditions $C^*(p)$ and ${C^*}'(p)$. 

\begin{Prob}
Characterize 
the competition-common enemy graphs of digraphs satisfying Conditions 
$C^*(p)$ and ${C^*}'(p)$. 
\end{Prob}

Niche graphs are another variant of competition graphs 
and were introduced by 
C. Cable, K. F. Jones, J.R. Lundgren, and S. Seager \cite{CJLS}. 
The {\it niche graph} of a digraph $D$ 
is the graph which has the same vertex set as $D$ 
and has an edge between two distinct vertices $x$ and $y$ 
if 
$N^+_D(x) \cap N^+_D(y) \neq \emptyset$ 
or  
$N^-_D(x) \cap N^-_D(y) \neq \emptyset$. 

\begin{Prob}
What are the niche graphs of digraphs satisfying Conditions 
$C(p)$, $C'(p)$, $C^*(p)$, or ${C^*}'(p)$? 
\end{Prob}


\end{document}